\DeclareFontFamily{OT1}{rsfs}{}
\DeclareFontShape{OT1}{rsfs}{n}{it}{<-> rsfs10}{}
\DeclareMathAlphabet{\mathscr}{OT1}{rsfs}{n}{it}
\newtheorem{theorem}{Theorem}[section]
\newtheorem{corol}[theorem]{Corollary}
\newtheorem{prop}[theorem]{Proposition}
\theoremstyle{definition} }
\theoremstyle{remark} \newtheorem{remark}[theorem]{Remark}
\newcommand{\cO}{{\mathcal {O}}}
\newcommand{\Nbb}{{\mathbb{N}}}
\newcommand{\Pbb}{{\mathbb{P}}}
\newcommand{\Lscr}{\mathscr{L}}
\newcommand{\Escr}{\mathscr{E}}
\newcommand{\Mscr}{\mathscr{M}}
\newcommand{\Fscr}{\mathscr{F}}
\title{On Generalized Sethi-Vafa-Witten Formulas}
\author{James Fullwood}
\address{
Mathematics Department,
Florida State University,
Tallahassee FL 32306, U.S.A.
}
\email{jfullwoo@math.fsu.edu}
\begin{document}
\maketitle

\begin{abstract}
We present a formula for computing proper pushforwards of classes in the Chow ring of a projective bundle under the projection $\pi:\Pbb(\Escr)\rightarrow B$, for $B$ a non-singular compact complex algebraic variety of any dimension. Our formula readily produces generalizations of formulas derived by Sethi,Vafa, and Witten to compute the Euler characteristic of elliptically fibered Calabi-Yau fourfolds used for F-theory compactifications of string vacua. The utility of such a formula is illustrated through applications, such as the ability to compute the Chern numbers of any non-singular complete intersection in such a projective bundle in terms of the Chern class of a line bundle on $B$.
\end{abstract}

\section{Introduction}\label{intro}
Let $B$ be a non-singular compact complex algebraic variety and
$\Lscr$ a line bundle on $B$. We consider a rank $(r+1)$ vector bundle
$\Escr\to B$, which is a direct sum of powers of $\Lscr$, i.e.,
\[
\Escr=\Lscr^{a_1}\oplus \cdots \oplus \Lscr^{a_{r+1}}\quad a_i\in \mathbb{Z}
\]
and its associated projectivization\footnote[1]{Here we take the projective bundle
of \emph{lines} in $\Escr$.} $\Pbb(\Escr)\overset \pi\to B$. Such a projective bundle provides a natural habitat
for \emph{relative} varieties over $B$ with an explicit fibration structure. For  $C\in A^{*}\Pbb(\Escr)$, we
give a formula for computing the canonical pushforward $\pi_*(C)$ in terms of $L=c_1(\Lscr)$. Due to the fact that $\int_{\Pbb(\Escr)}C=\int_{B}\pi_{*}(C)$, such a formula reduces integration on $\Pbb(\Escr)$ to integration on the base $B$, and so is useful for computing topological invariants of varieties in projective bundles (such as Chern numbers, for example). Our motivation comes from F-theory \cite{Vafa}\cite{Denef les houches}, which from a mathematical standpoint is the study of elliptically fibered Calabi-Yau $(n+1)$-folds (which descibe a physical theory of $10-2n$ real space-time dimensions for $n=1,2,3$), which arise naturally as hypersurfaces (or complete intersections) in projective bundles of the form stated above over an $n$-dimensional $B$. Determination of the Euler characteristics of such fibrations are crucial for tadpole cancellation in F-theory, as our formula was derived in an attempt to streamline such calculations. However, knowledge of F-theory will not be assumed, as the results here appear in a more general context.

 By the structure theorem for the Chow \emph{group} of a projective bundle~\cite{fulton}, the map $p:A^{*}B[\zeta]\rightarrow A^{*}\Pbb(\Escr)$ given by
\[
\beta_0+\beta_{1}\zeta+\cdots+\beta_{r}\zeta^{s}\longmapsto \pi^{*}\beta_0+\pi^{*}\beta_{1}\cdot c_1(\cO_{\Pbb(\Escr)}(1))+\cdots+\pi^{*}\beta_{s}\cdot c_1(\cO_{\Pbb(\Escr)}(1))^{s}
\]
is a surjective morphism of rings. Thus any $C\in A^{*}\Pbb(\Escr)$ may be realized as a polynomial in $\zeta$ with coefficients in $A^{*}B$, though not uniquely (e.g. $p(\zeta^{r+dim(B)+1})=0$ in
 $A^{*}\Pbb(\Escr)$). We elaborate more on the structure of $ A^{*}\Pbb(\Escr)$ in \S\ref{structure}. With these facts in mind, we state the main result of this note:

\begin{theorem}\label{mainthm}
With notation and assumptions as above, let $C\in A^{*}\Pbb(\Escr)$, and let $f_{C}= \beta_{0}+\cdots+\beta_{s}\zeta^{s}$ be any such polynomial which maps to $C$ under the natural projection $p:A^{*}B[\zeta]\to A^{*}\Pbb(\Escr)$. Then
\[
\pi_*(C) = \displaystyle\sum_{j=1}^{N}\displaystyle\sum_{i=1}^{m}q_{ij}\frac 1{(j-1)!}\, \frac{d^{j-1}}{d\zeta^{j-1}}(f_{C_{j}}) |_{\zeta=-d_{i}L}
\]
where the $d_i$s are distinct non-zero integers such that
$c(\Escr)=(1+d_{1}L)^{k_{1}}\cdots(1+d_{m}L)^{k_{m}}$, $N=max\{k_{i}\}$, $f_{C_{j}}=\frac{1}{\zeta^{r-j}}\cdot (f_C-(\beta_{0}+\cdots +\beta_{r-1}\zeta^{r-1}))$, and $q_{ij}\in \mathbb{Q}$ are the coefficients of the formal partial fraction decomposition of
$\frac{1}{c(\Escr)}=\frac{1}{(1+d_{1}L)^{k_{1}}\cdots(1+d_{m}L)^{k_{m}}}=\displaystyle\sum_{j=1}^{N}\displaystyle\sum_{i=1}^{m}q_{ij}\frac{1}{(1+d_{i}L)^{j}}$.
\end{theorem}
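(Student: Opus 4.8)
The plan is to reduce the statement to a closed formula for $\pi_*(\zeta^n)$, $\zeta:=c_1(\cO_{\Pbb(\Escr)}(1))$, and then to reorganise those pushforwards by means of the partial-fraction data attached to $c(\Escr)$. Write the chosen representative as $f_C=\sum_n\beta_n\zeta^n$. Since $C=p(f_C)=\sum_n\pi^*\beta_n\cdot\zeta^n$ in $A^*\Pbb(\Escr)$, the projection formula gives $\pi_*(C)=\sum_n\beta_n\,\pi_*(\zeta^n)$; this expression is automatically independent of the choice of $f_C$ because it is literally $\pi_*$ applied to the well-defined class $C$. So the whole problem is to pin down $\pi_*(\zeta^n)\in A^{n-r}B$.

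For this I would use the standard fact that, the fibre of $\pi$ being $\Pbb^{r}$, one has $\pi_*(\zeta^n)=0$ for $n<r$ and $\pi_*(\zeta^{r+k})=s_k(\Escr)$, the $k$-th Segre class, with $\sum_{k\ge 0}s_k(\Escr)t^k=c(\Escr)(t)^{-1}$. Because $\Escr=\bigoplus_i\Lscr^{a_i}$ we have $c(\Escr)(t)=\prod_i(1+d_iLt)^{k_i}$, so these combine into the single generating identity
\[
\sum_{n\ge 0}\pi_*(\zeta^n)\,t^n \;=\; \frac{t^{r}}{\prod_i(1+d_iLt)^{k_i}}\;=\;\frac{t^{r}}{c(\Escr)(t)}.
\]
Next I would feed in the partial fraction decomposition. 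Over $\mathbb{Q}(x)$ — using that the $d_i$ are distinct and nonzero — one has $\prod_i(1+d_ix)^{-k_i}=\sum_{i,j}q_{ij}(1+d_ix)^{-j}$ with exactly the rational coefficients $q_{ij}$ of the theorem; as an identity of power series in $x$ we may substitute $x=Lt$ and read off the coefficient of $t^n$, getting, for $n\ge r$,
\[
\pi_*(\zeta^n)=\sum_{i,j}q_{ij}\binom{-j}{\,n-r\,}(d_iL)^{\,n-r}
=\sum_{i,j}q_{ij}\,\frac{1}{(j-1)!}\,\Bigl[\tfrac{d^{\,j-1}}{d\zeta^{\,j-1}}\,\zeta^{\,n-r+j-1}\Bigr]_{\zeta=-d_iL},
\]
the second equality being the binomial identity $\binom{-j}{m}y^{m}=\binom{m+j-1}{j-1}(-y)^{m}=\frac{1}{(j-1)!}\bigl[\frac{d^{\,j-1}}{d\zeta^{\,j-1}}\zeta^{\,m+j-1}\bigr]_{\zeta=-y}$. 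Multiplying by $\beta_n$, summing over $n$, and pulling the $(i,j)$-sum and the differentiation outside, $\pi_*(C)$ becomes $\sum_{i,j}q_{ij}\frac{1}{(j-1)!}\bigl[\frac{d^{\,j-1}}{d\zeta^{\,j-1}}g_j\bigr]_{\zeta=-d_iL}$, where $g_j=\sum_{n\ge r}\beta_n\zeta^{\,n-r+j-1}=\zeta^{-(r+1-j)}\bigl(f_C-(\beta_0+\cdots+\beta_{r-1}\zeta^{r-1})\bigr)$ is precisely the polynomial $f_{C_j}$ of the statement. The subtraction of the degree-$<r$ part before dividing is exactly what keeps $f_{C_j}$ a genuine polynomial, so that the differentiation-and-evaluation is meaningful, and it costs nothing because $\pi_*(\zeta^n)=0$ for those $n$.

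The step demanding the most care is the bookkeeping that ties together the three formal variables in play — $\zeta$ in $A^*B[\zeta]$, the Segre-series variable $t$, and the partial-fraction variable $x$ — in the presence of the nilpotent class $L$. Everything is legitimate because $L^{\dim B+1}=0$ makes every series finite, and because the partial fraction step is carried out over $\mathbb{Q}$, where it is literally the decomposition of a rational function, and only afterwards specialised to $L$. This is also the reason for routing the computation through $1/c(\Escr)$ rather than attempting to evaluate $\pi_*(\zeta^n)$ as a residue of $\zeta^n/\prod_i(\zeta+d_iL)^{k_i}$ at $\zeta=-d_iL$ directly: the latter would force division by powers of the nilpotent $L$, which is not available, whereas the $\mathbb{Q}$-coefficient partial fractions sidestep this entirely.
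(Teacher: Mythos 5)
Your proof is correct and follows essentially the same route as the paper: reduce to the pushforwards $\pi_*(\zeta^n)$ via the projection formula, identify them with Segre classes through $s(\Escr)=1/c(\Escr)$, and encode the binomial coefficients of the partial-fraction expansion of $1/c(\Escr)$ as $(j-1)$-st derivatives evaluated at $\zeta=-d_iL$ (the paper does the single-factor case $c(\Escr)=(1+aL)^k$ first and reduces the general case to it by partial fractions, whereas you carry the $q_{ij}$ along from the start, but the mechanism is identical). One remark: your exponent $r+1-j$ in $g_j$ matches the paper's own proof (which uses $\zeta^{n-k+1}$) and its worked $E_8$ example (which divides by $H^2$ for $r=2$, $j=1$), so the $\zeta^{r-j}$ appearing in the theorem statement is an off-by-one slip in the statement, not a defect of your argument.
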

For many applications, such as computing pushforwards of Chern classes of elliptic fibrations used in F-theory compactifications of string vacua, this deceptively complicated formula involves no more than three summands. The double summation merely reflects the fact that we have to keep track of the partial fraction decomposition of $\frac{1}{c(\Escr)}$. The utility of such a formula lies in the fact that many classes $C\in A^{*}\Pbb(\Escr)$ one encounters in nature are given in terms of rational expressions in $\zeta=c_1(\cO_{\Pbb(\Escr)}(1))$ and pullbacks of classes in $B$, which can easily be expanded as a series in $\zeta$, giving one such a polynomial $f_{C}$. Once such a polynomial is in hand, all that is needed is a partial fraction decomposition of $\frac{1}{c(\Escr)}$ to compute $\pi_*(C)$. As the above discussion might suggest, such a formula lends itself naturally to a simple algorithm which can be easily implemented in computing devices.

We would like to point out that the non-singular hypothesis on $B$ is not absolutely necessary, for if $B$ is singular Theorem~\ref{mainthm} still applies to classes $C$ in the Chow \emph{group} of a projective bundle over $B$, long as the coefficients in the polynomial representation of $C$ described above are pullbacks of Chern classes of vector bundles on $B$, as we will see in \S\ref{structure}.

\emph{Acknowledgements}. The author would like to thank Paolo Aluffi for continued guidance and support, as well as being the impetus from which this work has sprung. He would also like to thank Mirroslav Yotov for proof reading and useful discussions.

\section{Sethi-Vafa-Witten Formulas}\label{svw}
Motivated by tadpole cancellation in F-theory, in~\cite{SVW} Sethi, Vafa, and Witten derived a formula for computing the Euler characteristic of an elliptically fibered Calabi-Yau fourfold in Weierstrass form in terms of the Chern classes of its base. Similar formulas for elliptic fibrations not in Weierstrass form were derived in~\cite{CY4}. In ~\cite{aluffi}, Aluffi and Esole   showed that these formulas are merely avatars of more general Chern class relations, which relate the canonical pushforward of the Chern class of the elliptic fibration with the Chern class of a divisor in the base, and which hold without any Calabi-Yau hypothesis or restrictions on the dimension of the base. A general scheme for producing such formulas was the primary motivation for this note.

\subsection{Elliptic fibrations}\label{EF}

Let $B$ be a non-singular compact complex algebraic variety endowed with a line bundle $\Lscr$. The elliptic fibrations we consider will be constructed by taking general equations of classical elliptic curves over a field $k$, and promoting their coefficients from general elements of $k$ to general sections of appropriate powers of $\Lscr$ (indeed, elliptic curves can be thought of as elliptic fibrations over a point). Such an equation naturally determines an elliptic fibration $\varphi:Y\to B$, in which $Y$ is realized as the zero-scheme of a section of a certain line bundle on $\Pbb(\Escr)$, where $\Escr$ is a direct sum of powers of $\Lscr$. We lift terminology from the physics literature in what follows.

Let $\Escr=\Lscr^{a_1}\oplus \Lscr^{a_2}\oplus \Lscr^{a_3}$. For $(a_1,a_2,a_3)=(0,1,1)$, we define an $E_6$ elliptic fibration over $B$ to be a non-singular hypersurface $Y$ in $\Pbb(\Escr)$, determined by the equation
\[
 E_6: x^3+y^3=dxyz+exz^2+fyz^2+gz^3
\]
where $x$,$y$,and $z$ are sections of $\cO_{\Pbb(\Escr)}(1)\otimes \Lscr$, $\cO_{\Pbb(\Escr)}(1)\otimes \Lscr$, and $\cO_{\Pbb(\Escr)}(1)$ respectively, and $d$, $e$, $f$, and $g$ are chosen to be suitably generic sections of $\Lscr$, $\Lscr^{2}$, $\Lscr^{2}$, and $\Lscr^{3}$ respectively.\footnote[2]{Here and throughout, we often write $\Lscr^a$ when we mean $\pi^{*}\Lscr^a$. The use of this elision should be clear from the context in which it is used.}

Similarly, for $(a_1,a_2,a_3)=(0,1,2)$, an $E_7$ elliptic fibration is a non-singular hypersurface $Y$ in the weighted projectivization $\Pbb_{1,1,2}(\Escr)$, determined by the equation
\[
E_7: y^2=x^4+ex^2z^2+fxz^3+gz^4
\]
where $e$, $f$, and $g$ are chosen to be sections of $\Lscr^{2}$, $\Lscr^{3}$, and $\Lscr^{4}$ respectively, and $x$, $y$, and $z$ are chosen to be sections of appropriate line bundles so that each monomial in the equation for $Y$ is a section of $\cO_{\Pbb(\Escr)}(4)\otimes \Lscr^4$.

The last family of elliptic fibrations we will mention are referred to in the physics literature as $E_8$ elliptic fibrations, determined by the Weierstrass normal equation
\[
E_8: y^{2}z=x^3+fxz^2+gz^3
\]
Here, $(a_1,a_2,a_3)=(0,2,3)$, $f$ and $g$ are sections of $\Lscr^{4}$ and $\Lscr^{6}$ respectively, and again $x$, $y$, and $z$ are chosen to be sections of appropriate line bundles so that each monomial in the equation for $Y$ is a section of $\cO_{\Pbb(\Escr)}(3)\otimes \Lscr^6$. As in the theory of algebraic curves, every non-singular elliptic fibration with a section is birational to a (possibly singular) elliptic fibration in Weierstrass form. Because of this, Weierstrass models of elliptic fibrations are often singled out in the F-theory literature. Though such a model is useful for computing the $j$-invariant as well as the \emph{location} of the singular fibers of such a fibration, singular fibers are not preserved under a birational map. As the singular fibers are crucial not only to the geometry of an elliptic fibration (e.g., only the singular fibers contribute to its Euler characteristic), but the physical interpretation as well (\cite{Vafa}\cite{Denef les houches}\cite{timo}, p.~21), alternate forms of elliptic fibrations not in Weierstrass form such as the $E_6$ and $E_7$ cases listed above (and ones not listed above) should perhaps stand on equal footing from a physical perspective. Be that as it may, we now summarize the results referenced in the beginning of this section, along with an application of Theorem~\ref{mainthm}.
\begin{theorem}\label{CC}(~\cite{aluffi})
Let $\varphi:Y\to B$ be an elliptic fibration of type $E_6$, $E_7$, or $E_8$(as defined above), and $Z$ a smooth hypersurface in $B$ in the same class as $g=0$. Then
\[
\varphi_{*}c(Y)=m\cdot c(Z) ,
\]
where m=4,3,2 respectively for $Y$ of type $E_6$, $E_7$, $E_8$, respectively.

\end{theorem}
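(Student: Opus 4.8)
The plan is to reduce all three cases, in a uniform way, to a single pushforward along $\pi$ of exactly the shape that Theorem~\ref{mainthm} is designed to compute. Write $\varphi=\pi\circ i$, where $i\colon Y\hookrightarrow\Pbb(\Escr)$ (the weighted projectivization $\Pbb_{1,1,2}(\Escr)$ in the $E_7$ case) realizes $Y$ as the zero scheme of a section of a line bundle $\Mscr$ on the ambient bundle; one reads off its class $[Y]=c_1(\Mscr)$ directly from the defining equation, obtaining, with $\zeta=c_1(\cO_{\Pbb(\Escr)}(1))$, the values $[Y]=3\zeta+3L$ for $E_6$, $[Y]=4\zeta+4L$ for $E_7$, and $[Y]=3\zeta+6L$ for $E_8$. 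Since $Y$ is non-singular, adjunction gives $c(TY)=i^{*}\bigl(c(T\Pbb(\Escr))/(1+[Y])\bigr)$, and combining this with the projection formula and $i_{*}i^{*}(-)=(-)\cdot[Y]$ yields
\[
\varphi_{*}c(Y)=\pi_{*}\!\left(\frac{c(T\Pbb(\Escr))}{1+[Y]}\cdot[Y]\right).
\]

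Next I would insert the geometry of the ambient bundle. From the tangent sequence of $\pi$ one gets $c(T\Pbb(\Escr))=\pi^{*}c(TB)\cdot c(T_{\pi})$, and the relative Euler sequence gives $c(T_{\pi})=\prod_{i=1}^{3}(1+\zeta+a_{i}L)$ for $\Escr=\Lscr^{a_1}\oplus\Lscr^{a_2}\oplus\Lscr^{a_3}$ (with the appropriate weighted modification in the $E_7$ case). By the projection formula the factor $c(TB)$ pulls out, so it remains to evaluate $\pi_{*}\bigl(c(T_{\pi})[Y]/(1+[Y])\bigr)\in A^{*}B$, and this is precisely the kind of class Theorem~\ref{mainthm} handles: expanding $1/(1+[Y])$ as a geometric series in $\zeta$ produces a polynomial $f_{C}\in A^{*}B[\zeta]$ representing it, and the factorization of $c(\Escr)$ that the formula needs is immediate from the exponents $a_i$ — namely $c(\Escr)=(1+L)^{2}$ for $E_6$ (so $m=1$, $N=2$), while $c(\Escr)=(1+L)(1+2L)$ for $E_7$ and $c(\Escr)=(1+2L)(1+3L)$ for $E_8$ (so $m=2$, $N=1$). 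The partial-fraction coefficients $q_{ij}$ are then a one-line computation, and the resulting formula produces $\pi_{*}\bigl(c(T_{\pi})[Y]/(1+[Y])\bigr)$ as an explicit power series in $L$.

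Finally I would compare with the right-hand side. The hypersurface $Z$ is cut out by the section $g$, so $[Z]=3L,4L,6L$ in the three cases, and the same adjunction argument on $B$ gives $\iota_{*}c(TZ)=\frac{c(TB)}{1+[Z]}\cdot[Z]$, hence $m\cdot c(Z)=c(TB)\cdot\frac{m[Z]}{1+[Z]}$. The assertion therefore reduces to the purely base-level identity
\[
\pi_{*}\!\left(\frac{c(T_{\pi})[Y]}{1+[Y]}\right)=\frac{m\,[Z]}{1+[Z]}=\frac{12L}{1+d_{g}L},\qquad d_{g}=3,4,6 ,
\]
which one verifies term by term against the output of Theorem~\ref{mainthm}; that the lowest-order terms already agree (both equal $12L$) encodes the relation $m\cdot d_{g}=12$. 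I expect the one genuinely delicate point to be the $E_7$ case, where the ambient space is a weighted projective bundle: one must either establish the analogue of Theorem~\ref{mainthm} on $\Pbb_{1,1,2}(\Escr)$ — the rational Chow ring has the same structure, with the relevant Segre classes rescaled by the weights — or relabel the homogeneous coordinates so as to realize $Y$ inside an honest projective bundle $\Pbb(\Escr)$. Apart from that, the argument is a mechanical application of the machinery already assembled, once the $\cO(1)$-versus-$\Lscr$ bookkeeping is kept straight.
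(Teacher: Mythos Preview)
Your proposal is correct and follows essentially the same route as the paper: adjunction plus the relative Euler sequence reduce $\varphi_{*}c(Y)$ to $c(B)$ times a class in $A^{*}B$ computed by Theorem~\ref{mainthm}, and this is then matched against $c(Z)$ via adjunction on $B$. For the $E_7$ case the paper adopts your second option, realizing $Y$ as a complete intersection (of two quadrics) in an unweighted $\Pbb^{3}$-bundle over $B$ and then applying Theorem~\ref{mainthm} there, rather than extending the formula to the weighted setting.
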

Before an illustration of how Theorem~\ref{mainthm} can be used to reproduce such formulas, we state a corollary of Theorem~\ref{CC}, which follows from the fact that taking the degree of a zero dimensional class is invariant under proper pushforwards, i.e., $\int\varphi_{*}c(Y)=\int c(Y)=\chi(Y)$, and the fact that $Y$ is Calabi-Yau if and only if $c_1(B)=c_1(\Lscr)$.\footnote[3]{The reader not familiar with this fact may enjoy proving this using B.5.8 in \cite{fulton}.}
\begin{corol}\label{chi}(\cite{SVW}, \cite{CY4})
Let $Y\to B$ be an elliptic fibration of type $E_6$, $E_7$, or $E_8$, with $Y$ a Calabi-Yau fourfold. Then
\begin{eqnarray*}
                      E_6:\hspace{2mm} \chi(Y)&=&\int12c_1(B)c_2(B)+72c_1(B)^3\\
                      E_7:\hspace{2mm} \chi(Y)&=&\int12c_1(B)c_2(B)+144c_1(B)^3\\
                      E_8:\hspace{2mm} \chi(Y)&=&\int12c_1(B)c_2(B)+360c_1(B)^3\\
\end{eqnarray*}
\end{corol}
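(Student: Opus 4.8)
My approach is to deduce the Corollary directly from Theorem~\ref{CC} by combining its pushforward relation with the classical adjunction formula for the smooth hypersurface $Z\subset B$. First I would record the dimension count: since $\varphi:Y\to B$ is an elliptic fibration with $Y$ a fourfold, $B$ is a threefold, and hence $Z$ is a smooth projective surface. Because proper pushforward preserves degrees of zero-dimensional cycles, Theorem~\ref{CC} gives
\[
\chi(Y)=\int_Y c(Y)=\int_B \varphi_* c(Y)=m\int_B c(Z)=m\int_Z c(T_Z)=m\,\chi(Z),
\]
where in the middle I interpret the class $c(Z)\in A_*B$ of Theorem~\ref{CC} as $\iota_*\bigl(c(T_Z)\cap[Z]\bigr)$ for $\iota:Z\hookrightarrow B$, apply the projection formula, and use Gauss-Bonnet on the surface $Z$; here $m=4,3,2$ in the cases $E_6,E_7,E_8$. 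This reduces the problem to computing $\chi(Z)$ and then imposing the Calabi-Yau condition.

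Next I would pin down the class of $Z$. Since $Z$ lies in the class of $\{g=0\}$, and $g$ is a section of $\Lscr^{3},\Lscr^{4},\Lscr^{6}$ for $E_6,E_7,E_8$ respectively, we have $[Z]=nL$ in $A^1B$ with $L=c_1(\Lscr)$ and $n=3,4,6$. The normal bundle sequence
\[
0\longrightarrow T_Z\longrightarrow T_B|_Z\longrightarrow \cO_B(Z)|_Z\longrightarrow 0,
\]
together with $c_1\bigl(\cO_B(Z)|_Z\bigr)=nL|_Z$, yields $c(T_Z)=c(T_B)|_Z\cdot(1+nL|_Z)^{-1}$, and reading off the degree-two part gives $c_2(T_Z)=\bigl(c_2(B)-nL\,c_1(B)+n^2L^2\bigr)|_Z$. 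Applying Gauss-Bonnet and then the projection formula once more,
\[
\chi(Z)=\int_Z c_2(T_Z)=\int_B nL\bigl(c_2(B)-nL\,c_1(B)+n^2L^2\bigr).
\]

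Finally I would invoke the Calabi-Yau hypothesis, which by the equivalence recalled before the statement amounts to $L=c_1(B)$. Substituting and expanding gives $\chi(Z)=\int_B n\,c_1(B)c_2(B)+(n^3-n^2)\,c_1(B)^3$, so that $\chi(Y)=m\,\chi(Z)$ reproduces exactly the three displayed formulas upon plugging in $(n,m)=(3,4),(4,3),(6,2)$. (Alternatively one could bypass Theorem~\ref{CC} and push $c(Y)$ forward in two stages, using Theorem~\ref{mainthm} for the projection $\pi:\Pbb(\Escr)\to B$, but this is unnecessary given Theorem~\ref{CC}.) I do not expect a genuine obstacle; the only delicate points are the dimension bookkeeping that makes $Z$ a surface — so that $c_2(T_Z)$ is the term carrying the Euler characteristic — and the correct reading of the right-hand side $c(Z)$ of Theorem~\ref{CC} as a class in $A_*B$ whose degree is $\chi(Z)$.
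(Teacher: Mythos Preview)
Your proof is correct and follows essentially the same route as the paper: the paper also derives the corollary from Theorem~\ref{CC} by taking degrees of the zero-dimensional component of $\varphi_*c(Y)=m\cdot c(Z)$ and substituting $L=c_1(B)$ from the Calabi--Yau condition. Your explicit adjunction computation for $\chi(Z)$ simply unpacks what the paper encodes in the formula $c(Z)=\frac{nL}{1+nL}\,c(B)$.
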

\begin{remark}
Though indeed Corollary~\ref{chi} is a consequence of Theorem~\ref{CC}, chronologically Corollary~\ref{chi} appeared first. We would like to think of Theorem~\ref{CC} as a representation of a deeper, more precisely formulated geometric relationship between the fibration and the base than that of Corollary~\ref{chi}.
\end{remark}
We now illustrate the utility of Theorem~\ref{mainthm} with a proof of Theorem~\ref{CC} in the $E_8$ case.

\begin{proof} Let $L=c_1(\Lscr)$, $H=c_1(\cO_{\Pbb(\Escr)}(1))$, and denote the projection $\Pbb(\Escr)\to B$ by $\pi$. By adjunction and B.5.8 in \cite{fulton},
\[
c(Y)=\frac{(1+H)(1+2L+H)(1+3L+H)}{1+3H+6L}(3H+6L)\pi^{*}c(B),
\]
so
\[
\varphi_{*}c(Y)=\pi_{*}\left(\frac{(1+H)(1+2L+H)(1+3L+H)}{1+3H+6L}(3H+6L)\right)c(B)
\]
by the projection formula. Thus computing $\varphi_{*}c(Y)$ amounts to computing
\[
\pi_{*}\left(\frac{(1+H)(1+2L+H)(1+3L+H)}{1+3H+6L}(3H+6L)\right),
\]
which is where Theorem~\ref{mainthm} comes into play. Let $C=\frac{(1+H)(1+2L+H)(1+3L+H)}{1+3H+6L}(3H+6L)\in A^{*}\Pbb(\cO\oplus \Lscr^{2}\oplus \Lscr^{3})$. Now
\[
\frac{1}{c(\Escr)}=\frac{1}{(1+2L)(1+3L)}=-2\cdot \frac{1}{1+2L} + 3\cdot \frac{1}{1+3L},
\]
and $C$ expanded as a series in $H$ is
\[
C=\beta_{0}+\beta_{1}H+\beta_{2}H^2+ \cdots,
\]
where the $\beta_{i}$'s are rational expressions in $L$, giving us our polynomial $f_C$ ($H^e=0$ for $e>dim(B)+2$). So in the notation of Theorem~\ref{mainthm}, $N=1$ so there is only one $f_{C_{j}}$, namely $f_{C_{1}}$, thus we can take
\[
f_{C_{1}}(H)=\beta_{2}+\beta_{3}H+\cdots +\beta_{2+dim(B)}H^{dim(B)}=\frac{C-(\beta_{0}+\beta_{1}H)}{H^2}.
\]
Therefore, by Theorem~\ref{mainthm}
\begin{eqnarray*}
\pi_{*}(C)&=&-2\cdot f_{c_{1}}(-2L)+3\cdot f_{c_{1}}(-3L)\\
          &=&-2\cdot\frac{C-(\beta_{0}+\beta_{1}(-2L))}{(-2L)^{2}}+3\cdot\frac{C-(\beta_{0}+\beta_{1}(-3L))}{(-3L)^{2}}\\
          &=&2\cdot \frac{6L}{1+6L},\\
\end{eqnarray*}
and since $g$ is a section of $\Lscr^{6}$,
\[
c(Z)=\frac{6L}{1+6L}c(B),
\]
thus the theorem is proved. Corollary 3.2 is obtained by plugging in $c_1(B)$ for $L$, then taking the degree of its zero dimensional component.\end{proof}
Note that in this case the formula from Theorem~\ref{mainthm} involved only two summands. We would also like to point out that in the proof above instead of using the polynomial $f_{C_1}$, we use the equivalent rational expression $\frac{C-(\beta_{0}+\beta_{1}H)}{H^2}$ (where the rational expression for $C$ is used), in hopes that the final result will \emph{look} like a Chern class as well. This is precisely what happens in all three cases of Theorem~\ref{CC}.\footnote[4]{For the $E_7$ case, we embed $Y$ as a complete intersection in an unweighted $\Pbb^{3}$ bundle over $B$, and then apply Theorem~\ref{mainthm}} Had we used the polynomial expression for $f_{C_1}$, $\pi_{*}(C)$'s identity as a Chern class of a divisor in the base would have been obscured by its polynomial form. Of course the result of Corollary 3.2 is but only \emph{one} of the Chern numbers of $Y$, the rest of which are just as easily obtained by application of Theorem~\ref{mainthm} (actually, due to the fact that $Y$ is Calabi-Yau, the only other non-zero Chern number is $\int c_2(Y)^2=\int24c_1(B)c_2(B)+120c_1(B)^3$).
\subsection{Plane curve fibrations of arbitrarily large genus}\label{HEF}
After proving Theorem~\ref{CC} using Theorem~\ref{mainthm}, we notice there was nothing special about the fact that the generic fiber of $Y$ was a degree $3$ plane curve of genus $1$. As such, we fully exploit the utility of Theorem~\ref{mainthm} and upgrade Theorem~\ref{CC} to a statement about plane curve fibrations of arbitrarily large genus, which we now consider.

Again, let $B$ be a non-singular compact complex algebraic variety endowed with a line bundle $\Lscr$, and let $\Escr=\cO\oplus \Lscr^{a}\oplus \Lscr^{b}$, for some $a,b\in \mathbb{Z}$. We consider a hypersurface $Y_{\vec{w}}$ in $\Pbb(\Escr)$ of class $[Y_{\vec{w}}]=dH+eL$ for some $d\in \Nbb, e\in \mathbb{Z}$, where $L=c_1(\Lscr)$, $H=c_1(\cO_{\Pbb(\Escr)}(1))$, and $\vec{w}=(a,b,d,e)$ is the vector of parameters on which our variety depends. Such a hypersurface determines a fibration $Y_{\vec{w}}\to B$ whose generic fiber is a plane curve of genus $g=\frac{(d-1)(d-2)}{2}$. We show that these fibrations are almost never Calabi-Yau, along with an extension of Theorem~\ref{CC}.

\begin{prop}\label{CYC}
Let $\varphi:Y_{\vec{w}}\to B$ be a plane curve fibration as defined above. Then $Y_{\vec{w}}$ cannot be Calabi-Yau for $d\neq3$. For $d=3$, $Y_{\vec{w}}$ is Calabi-Yau if and only if
\[
c_1(B)=(e-a-b)\cdot c_1(\Lscr).
\]
\end{prop}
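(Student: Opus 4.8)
The plan is to extract the first Chern (equivalently, canonical) class of $Y_{\vec{w}}$ from adjunction together with the relative Euler sequence of $\Pbb(\Escr)$, and then to separate its ``fibre direction'' from its ``base direction'' by restricting to a general fibre of $\varphi$. First I would record the Chern data. Writing $H=c_1(\cO_{\Pbb(\Escr)}(1))$ and $L=c_1(\Lscr)$ (with $\pi^{*}$ suppressed as in the text), the relative Euler sequence for $\Pbb(\Escr)\to B$ with $\Escr=\cO\oplus\Lscr^{a}\oplus\Lscr^{b}$ yields
\[
c\big(T_{\Pbb(\Escr)/B}\big)=(1+H)(1+H+aL)(1+H+bL),
\]
so $c_1(T\Pbb(\Escr))=3H+(a+b)L+\pi^{*}c_1(B)$. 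Since $[Y_{\vec{w}}]=dH+eL$, adjunction gives
\[
c_1(Y_{\vec{w}})=\big((3-d)H+(a+b-e)L+\pi^{*}c_1(B)\big)\big|_{Y_{\vec{w}}},
\]
and $Y_{\vec{w}}$ is Calabi--Yau exactly when this class vanishes. (This is the same computation that appears in the proof of Theorem~\ref{CC}; specialising to the $E_8$ values $(a,b,d,e)=(2,3,3,6)$ recovers the condition $c_1(B)=c_1(\Lscr)$ quoted there, which serves as a sanity check.)

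Next I would dispose of the case $d\neq 3$. Let $F$ be a general fibre of $\varphi$, a degree-$d$ plane curve sitting inside a $\Pbb^{2}$-fibre of $\pi$. Restricting to $F$ kills the pulled-back classes $L$ and $\pi^{*}c_1(B)$, while $H|_{F}$ is the restriction of $c_1(\cO_{\Pbb^{2}}(1))$ and so has degree $d>0$. Hence $\deg\big(c_1(Y_{\vec{w}})|_{F}\big)=d(3-d)\neq 0$, so $c_1(Y_{\vec{w}})$ is not even numerically trivial and $Y_{\vec{w}}$ cannot be Calabi--Yau.

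For $d=3$ the $H$-term disappears and $c_1(Y_{\vec{w}})=\varphi^{*}\big(c_1(B)-(e-a-b)c_1(\Lscr)\big)$ is a pullback from $B$. Because $\varphi$ is proper and surjective with connected fibres (the general fibre is an irreducible plane cubic), one has $\varphi_{*}\cO_{Y_{\vec{w}}}=\cO_{B}$, and hence $\varphi^{*}$ is injective on divisor classes by the projection formula; therefore $c_1(Y_{\vec{w}})=0$ if and only if $c_1(B)=(e-a-b)c_1(\Lscr)$, which is the assertion.

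The Chern-class bookkeeping above is routine and the fibre-restriction argument is essentially immediate, so the only genuinely delicate points are the standing assumptions behind the last two paragraphs: that a general member of $|dH+eL|$ is smooth and irreducible with general fibre over $B$ a smooth (hence connected) plane curve. I would settle these with Bertini, assuming (as is implicit in the setup) that $dH+eL$ is positive enough for the relevant linear system to be suitably base-point free. Finally, it is worth remarking that the exceptional role of $d=3$ is structural: $3H$ is precisely the part of $c_1(T_{\Pbb(\Escr)/B})$ surviving restriction to a fibre, with $3=\operatorname{rk}\Escr$, so the Calabi--Yau condition can only hold when the fibre degree $d$ matches this rank.
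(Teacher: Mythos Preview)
Your argument is correct and follows essentially the same route as the paper: compute $c_1(Y_{\vec{w}})$ (equivalently $K_{Y_{\vec{w}}}$) by adjunction and the relative Euler sequence, then intersect/restrict with a generic fibre to force $d=3$, and read off the base condition from what remains. If anything you are slightly more careful than the paper in the $d=3$ step, since you explicitly justify the injectivity of $\varphi^{*}$ via $\varphi_{*}\cO_{Y_{\vec{w}}}=\cO_{B}$, whereas the paper simply asserts the conclusion once the $H$-term vanishes.
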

\begin{proof}
A standard calculation yields
\[
K_{Y_{\vec{w}}}=\pi^{*}((e-a-b)\cdot L-c_1(B))+(3-d)\cdot c_1(\cO_{\Pbb(\Escr)}(1)).
\]
Thus $Y_{\vec{w}}$ is Calabi-Yau if and only if
\begin{equation*}
\tag{$\dagger$}
\pi^{*}((e-a-b)\cdot L-c_1(B))=(d-3)\cdot c_1(\cO_{\Pbb(\Escr)}(1))
\end{equation*}
But intersecting both sides of this equation with the class $F$ of a generic fiber yields
\[
0=(d-3)\cdot d[pt],
\]
which is obviously false for $d\neq3$. For $d=3$, the RHS of $(\dag)$ is $0$ so the proposition is proved.
\end{proof}

We now upgrade Theorem~\ref{CC}, as promised.
\begin{theorem}\label{CF}
Let $\varphi:Y_{\vec{w}}\to B$ be a plane curve fibration as defined above, and let $\Fscr=\Lscr^{e}\oplus \Lscr^{e-ad}\oplus \Lscr^{e-bd}$. Then
\[
\varphi_{*}c(Y_{\vec{w}})=X\cdot s(\Fscr)\cdot c(B),
\]
where $X=((3e^3-3bde^2-3ade^2+3abd^{2}e)\cdot L^{3}+(6e^2-4bde-4ade+2abd^2)\cdot L^{2}+(3de+3e-ad-ad^2-bd^2-bd)\cdot L+(3d-d^2))\in A^{*}B$, and $s(\Fscr)=c(\Fscr)^{-1}$ is the Segre class of $\Fscr$.

\end{theorem}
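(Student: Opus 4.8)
The plan is to mimic the $E_8$ computation from the proof of Theorem~\ref{CC}, but now carrying the general parameters $\vec{w}=(a,b,d,e)$ through the calculation. First I would set $L=c_1(\Lscr)$, $H=c_1(\cO_{\Pbb(\Escr)}(1))$, and write $\pi:\Pbb(\Escr)\to B$ for the projection, where $\Escr=\cO\oplus\Lscr^{a}\oplus\Lscr^{b}$. By adjunction together with B.5.8 in \cite{fulton}, the total Chern class of $Y_{\vec{w}}$ is
\[
c(Y_{\vec{w}})=\frac{(1+H)(1+aL+H)(1+bL+H)}{1+dH+eL}\,(dH+eL)\,\pi^{*}c(B),
\]
since $[Y_{\vec{w}}]=dH+eL$ and $c(\Escr)=(1+H)(1+aL+H)(1+bL+H)$ pulled back (the first factor $(1+H)$ coming from $c(\Escr)=\prod(1+a_iL+H)$ with $a_1=0$). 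By the projection formula,
\[
\varphi_{*}c(Y_{\vec{w}})=\pi_{*}\!\left(\frac{(1+H)(1+aL+H)(1+bL+H)}{1+dH+eL}\,(dH+eL)\right)c(B),
\]
so everything reduces to computing the pushforward of the rational class
\[
C=\frac{(1+H)(1+aL+H)(1+bL+H)}{1+dH+eL}\,(dH+eL)\in A^{*}\Pbb(\Escr).
\]

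Next I would apply Theorem~\ref{mainthm}. Here $r=2$, and $c(\Escr)=(1+aL)(1+bL)$ formally (the $(1+H)$-type factors are not of the required shape, but what enters the theorem is $c(\pi_*\cdots)$; more precisely the relevant decomposition is of $1/((1+aL)(1+bL))$), so generically $a,b$ are the distinct integers $d_i$ with all $k_i=1$, hence $N=1$ and only $f_{C_1}$ appears. Following the same device used in the $E_8$ proof, rather than expanding $C$ as an honest polynomial in $H$ I would keep the rational expression and use
\[
f_{C_1}(H)=\frac{C-(\beta_0+\beta_1 H)}{H^{2}},
\]
where $\beta_0,\beta_1$ are the first two coefficients of the $H$-expansion of $C$. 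Then Theorem~\ref{mainthm} gives
\[
\pi_{*}(C)=q_{1}\,f_{C_1}(-aL)+q_{2}\,f_{C_1}(-bL),
\]
with $q_1,q_2$ the partial-fraction coefficients of $1/((1+aL)(1+bL))$, namely $q_1=-a/(b-a)\cdot\frac{?}{?}$ — at any rate the coefficients determined by $\tfrac{1}{(1+aL)(1+bL)}=\tfrac{q_1}{1+aL}+\tfrac{q_2}{1+bL}$. Substituting $H=-aL$ kills the factor $(1+aL+H)$ in the numerator of $C$ and likewise $H=-bL$ kills $(1+bL+H)$, so each term collapses dramatically; the surviving pieces are rational functions of $L$ with denominators $(1-aL)(1-d a L+eL)$ and $(1-bL)(1-dbL+eL)$ respectively (up to sign bookkeeping from the $(-aL)^2$, $(-bL)^2$ denominators in $f_{C_1}$). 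Combining the two terms over a common denominator, I expect the denominator to become $(1+eL)(1+(e-ad)L)(1+(e-bd)L)=c(\Fscr)$ after simplification, and the numerator to be exactly the cubic polynomial $X$ claimed in the statement. Finally, since $s(\Fscr)=c(\Fscr)^{-1}=1/\big((1+eL)(1+(e-ad)L)(1+(e-bd)L)\big)$, we get $\pi_*(C)=X\cdot s(\Fscr)$, and multiplying by $c(B)$ finishes the proof.

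The main obstacle is purely computational: after the two substitutions one must clear denominators, collect terms, and verify both that the denominator factors as $c(\Fscr)$ and that the numerator is precisely $X$. This is the step where sign errors and bookkeeping mistakes are most likely, particularly in handling the $\beta_0+\beta_1 H$ subtraction (one should check that the $\beta_0$, $\beta_1$ terms, which are the "low-degree garbage" removed to make $f_{C_1}$ well-defined, contribute consistently after substitution) and in tracking the partial-fraction coefficients $q_1,q_2$ as functions of $a,b$. A sanity check is to specialize $(a,b,d,e)=(2,3,3,6)$, which is the $E_8$ case: then $\Fscr=\Lscr^{6}\oplus\Lscr^{0}\oplus\Lscr^{0}$, so $s(\Fscr)=1/(1+6L)$, and $X$ should reduce to $2\cdot 6L=12L$ up to the normalization, reproducing $\varphi_*c(Y)=2\cdot\frac{6L}{1+6L}c(B)$ from the earlier proof; checking this specialization (and perhaps the $E_6$ case $(0,1,1,3)$ after re-indexing) will catch most errors in the general formula for $X$.
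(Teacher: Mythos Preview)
Your approach is correct and is exactly what the paper does: it states that the proof is ``exactly the same as the proof given above for the $E_8$ case of Theorem~\ref{CC}'' and omits the details. One small slip in your sanity check: for $(a,b,d,e)=(2,3,3,6)$ one has $e-bd=6-9=-3$, so $\Fscr=\Lscr^{6}\oplus\cO\oplus\Lscr^{-3}$ (not $\Lscr^{6}\oplus\cO\oplus\cO$), hence $s(\Fscr)=1/((1+6L)(1-3L))$, and indeed $X$ specializes to $12L-36L^{2}=12L(1-3L)$, recovering $\frac{12L}{1+6L}$ as desired.
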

The proof is exactly the same as the proof given above for the $E_8$ case of Theorem~\ref{CC}, and is omitted. We stress the fact that the formula of Theorem~\ref{mainthm} is essential in the proof of this result, as it allows us to generate the class $X$ and the vector bundle $\Fscr$ as a function of the parameters $\vec{w}$. The $E_6$ and $E_8$ cases of Theorem~\ref{CC} are just the $(1,1,3,3)$ and $(2,3,3,6)$ cases of this result respectively. It is natural to ask wether or not there exist other $\vec{w}$s for which $\varphi_{*}c(Y_{\vec{w}})$ is supported on a Chern class of a divisor in the base other than $(1,1,3,3)$ and $(2,3,3,6)$. To give a partial answer, we multiply $X\cdot s(\Fscr)$ by $1+fL$ for $f\in \mathbb{Z}$, expand the result as a series in $L$, then set the first few coefficients of $L^i$ for $i\neq1$ equal to zero. Our solutions are then candidates for possible $\vec{w}$s. It turns out our solutions all lead to $\varphi_{*}c(Y_{\vec{w}})$s of the desired form. We list the results.
\\

\begin{center}
    \begin{tabular}{ | l | l | l | l | l | l | }
    \hline
$\vec{w}$ & $(\frac{f}{2},\frac{f}{4},3,f)$ & $(\frac{f}{2},\frac{f}{3},3,f)$ & $(\frac{f}{3},\frac{f}{3},3,f)$ & $(\frac{-f}{2},\frac{-f}{4},3,\frac{-f}{2})$ & $(\frac{-f}{2},\frac{-f}{6},3,\frac{-f}{2})$  \\ \hline
$\varphi_{*}c(Y_{\vec{w}})$ & $\frac{3fL}{1+fL}c(B)$ & $\frac{2fL}{1+fL}c(B)$ & $\frac{4fL}{1+fL}c(B)$ & $\frac{3fL}{1+fL}c(B)$ & $\frac{2fL}{1+fL}c(B)$ \\ \hline

    \end{tabular}
\end{center}

\subsection{A K3/elliptically fibered Calabi-Yau fourfold}\label{K3E}
We conclude this section with a pretty example inspired by F-theory/heterotic dualities. We construct a class of Calabi-Yau fourfolds $Y$ which are both elliptically fibered as well as $K3$ fibered, and compute their arithmetic genera using only adjunction and Theorem~\ref{mainthm}. Our starting point is a $K3$ surface, realized as an $E_8$ elliptic fibration as defined above with $B=\Pbb^1$:

\[
\xymatrix{
K3 \ar[r] \ar[d] & \Pbb(\cO\oplus \cO(4)\oplus \cO(6)) \ar[dl] \\
\Pbb^1
}
\]
Recall that an $E_8$ elliptic fibration is Calabi-Yau if and only if $c_1(\Lscr)=c_1(B)$, determining the choice $\Lscr=\cO_{\Pbb^1}(2)$. Next, we take a $\Pbb^1$-bundle $S=\Pbb(\cO\oplus \cO(n))\to \Pbb^2$, and construct an $E_8$ elliptic fibration $Y\to S$ over this threefold in such a way that the restriction of our fibration to each of the $\Pbb^1$ fibers of $S$ is our $K3$ constructed above. The resulting fibration $Y$ will be elliptically fibered over $S$ as well as $K3$ fibered over $\Pbb^2$. To accomplish this, we need a line bundle $\Mscr$ on $S$ such that we have the following commutative diagram for any fiber $\Pbb^1$ of $S$:
\[
\xymatrix{
K3 \ar[r] \ar[dr] & \Pbb(\cO\oplus \cO(4)\oplus \cO(6))=\Pbb(i^{*}\Escr) \ar[r] \ar[d] & \Pbb(\Escr) = \Pbb(\cO \oplus \Mscr^2 \oplus \Mscr^3) \ar[d]^{\pi_2}  & Y \ar[l] \ar[dl]\\
 & \Pbb^1 \ar[r]^{i} & S=\Pbb(\cO \oplus \cO(n)) \ar[d]^{\pi_1} \\
 & & \Pbb^2
}
\]
And for $Y$ to be Calabi-Yau, we also need $c_1(\Mscr)=c_1(S)=2K+\pi_{1}^{*}c_1(\cO(n))+\pi_{1}^{*}c_1(\Pbb^2)$, where $K=c_1(\cO_{\Pbb(\Fscr)}(1))$ and $\Fscr=\cO\oplus \cO(n)$. These two conditions on $\Mscr$ are achieved by taking $\Mscr=\pi_{1}^{*}\cO_{\Pbb^2}(3)\otimes \cO_{S}(2)\otimes \pi_{1}^{*}\cO_{\Pbb^{2}}(n)$ (the former condition is satisfied due to the fact that pullback bundles don't affect the geometry of $\Pbb(\Escr)$ as we move along the individual $\Pbb^1$ fibers in the base $S$, so that $\Pbb(i^{*}\Escr)=\Pbb(\cO\oplus \cO(4)\oplus \cO(6))$). Repeating this construction with an $E_6$ or $E_7$ $K3$ surface as our starting point yields two more classes of examples. To compute their arithmetic genera, we first compute their Chern numbers via Theorem~\ref{mainthm}. For example in the $E_6$ case, we have that

\[
c(Y)=\frac{(1+H)(1+H+M)^{2}(3H+3M)\pi_{2}^{*}c(S)}{1+3H+3M}\in A^{*}\Pbb(\Escr),
\]
where $M=c_1(\Mscr)$, and $H=c_1(\cO_{\Pbb(\Escr)}(1))$. Expanding this expression as a series in $H$ immediately yields

\begin{eqnarray*}
c_2^{2}=\alpha_{0}+\alpha_{1}H+\cdots +\alpha_{5}H^5\in A^{5}\Pbb(\Escr), & \\
c_4=\beta_{0}+\beta_{1}H+\cdots +\beta_{5}H^5\in A^{5}\Pbb(\Escr), & \\
\end{eqnarray*}
where the $\alpha_{i}$s and $\beta_{j}$s are polynomial expressions in $M$ and the Chern classes of $S$. Using the fact that in the $E_6$ case $\frac{1}{c(\Escr)}=\frac{1}{(1+M)^2}$, we first pushforward these zero-cycles to $S$ using Theorem~\ref{mainthm}:

\begin{eqnarray*}
{\pi_{2}}_{*}(c_2^{2})&=& \frac{d}{dH} \left(\frac{c_2^{2}-(\alpha_{0}+\alpha_{1}H)}{H}\right)|_{H=-M}  \\
                      &=&24c_1(S)c_2(S)+24c_1(S)^3  \\
                      &=&a_{0}+a_{1}K+a_{2}K^2+a_{3}K^3,  \\
{\pi_{2}}_{*}(c_4)&=& \frac{d}{dH} \left(\frac{c_4-(\beta_{0}+\beta_{1}H)}{H}\right)|_{H=-M}  \\
                  &=&12c_1(S)c_2(S)+72c_1(S)^3  \\
                  &=&b_{0}+b_{1}K+b_{2}K^2+b_{3}K^3, \\
\end{eqnarray*}
where the $a_{i}s$ and $b_{j}s$ are polynomials in $h=c_1(\cO_{\Pbb^2}(1))$ of degrees $3-i$ and $3-j$ respectively. The final two equalities for each expression come from the fact that
\[
M=c_1(S)=(n+3)h+2K, \hspace{5mm}c_2(S)=3(n+1)h^2+(n+6)h\cdot K+K^2.
\]
Then using the fact that $\frac{1}{c(\Fscr)}=\frac{1}{(1+nh)}$, we apply Theorem~\ref{mainthm} once again to get that

\begin{eqnarray*}
{\pi_{1}}_{*}({\pi_{2}}_{*}(c_2^{2}))&=&\left( \frac{{\pi_{2}}_{*}(c_2^{2})-a_0}{K}\right)|_{K=-nh} \\
                                     &=&(1872+48n^2)h^2, \\
{\pi_{1}}_{*}({\pi_{2}}_{*}(c_4))&=&\left( \frac{{\pi_{2}}_{*}(c_4)-b_0}{K}\right)|_{K=-nh} \\
                                 &=&(4176+144n^2)h^2. \\
\end{eqnarray*}
And since $\int \pi_{*}(C)=\int C$ for any proper pushforward $\pi_{*}$, we get that
\[
\int_{\Pbb(\Escr)}c_2(Y)^2=1872+48n^2, \hspace{5mm}\int_{\Pbb(\Escr)}c_4(Y)=4176+144n^2.
\]
Repeating this process for the $E_7$ and $E_8$ cases yields the following non-zero Chern numbers for all fibrations considered in this example:
\\
\begin{center}
    \begin{tabular}{ | l | l | l | l | }
    \hline
Chern numbers & $E_6$ & $E_7$ & $E_8$ \\ \hline
    $\int c_2(Y)^2$ & $1872+48n^2$ & $3168+96n^2$ & $7056+240n^2$ \\ \hline
    $\int c_4(Y)=\chi(Y)$ & $4176+144n^2$ & $8064 + 288n^2$ & $19728+720n^2$ \\ \hline

    \end{tabular}
\end{center}

With Chern numbers in hand and using the fact that $H^{p}(Y,\Omega^{q})=H^{q,p}(Y)$, we immediately compute the arithmetic genera $\chi_{q}:=\displaystyle\sum_{p=0}^{4}(-1)^{p}h^{q,p}(Y)=\int_{\Pbb(\Escr)}ch(\Omega^{q})Td(Y)$ (the second equality follows by Hirzebruch-Riemann-Roch):
\\

\begin{center}
    \begin{tabular}{ | l | l | l | l | }
    \hline
         arithmetic genera & $E_6$ & $E_7$ & $E_8$ \\ \hline
    $\chi_0=\frac{1}{720}\int3c_{2}^{2}-c_4$ & $2$ & $2$ & $2$ \\ \hline
    $\chi_1=\frac{1}{180}\int3c_{2}^{2}-31c_4$ & $-688-24n^2$ & $-1336 - 48n^2$ & $-3280-120n^2$\\ \hline
    $\chi_2=\frac{1}{120}\int3c_{2}^{2}+79c_4$ & $2796+96n^2$ & $5388+192n^2$ & $13164+480n^2$\\ \hline
    \end{tabular}
\end{center}
The fact that $\chi_0=2$ in all the cases above reflects the well known fact that for Calabi-Yau fourfolds
with full $SU(4)$ holonomy we have that $h^{0,0}=1$, $h^{1,0}=h^{2,0}=h^{3,0}=0$, and $h^{4,0}=1$. We note that no intersection numbers were computed in this example, as Theorem~\ref{mainthm} reduced the computation of Chern numbers to reading off a coefficient of $h^2$ in $A^{*}\Pbb^2$.

\section{On the Structure of $A_{*}\Pbb(\Escr)$}\label{structure}
We now take time to elaborate more on the structure of the Chow \emph{group} $A_{*}\Pbb(\Escr)$ for a general projective bundle over an algebraic scheme, and discuss how the hypotheses of Theorem~\ref{mainthm} can be eased to apply to this more general setting.

Let $B$ be an algebraic scheme over a field, $\Escr\to B$ be a vector bundle of rank $(n+1)$, and $\Pbb(\Escr)\overset \pi \to B$ its associated projectivization. If $B$ is singular, $\Pbb(\Escr)$ might acquire singularities as well, in which case intersection products of algebraic cycles are not well defined, denying us a commutative ring structure on the Chow group $A_{*}\Pbb(\Escr)$. However, the structure theorem for the Chow group of a projective bundle tells us that we can still \emph{think} of classes in $A_{*}\Pbb(\Escr)$ as polynomials in powers of $c_1(\cO_{\Pbb(\Escr)}(1))$ with coefficients in $A_{*}B$ none the less. The precise statement is the following:
\begin{theorem}(\cite{fulton})With notation and assumptions
as above, each element $\beta\in A_{k}\Pbb(\Escr)$ is uniquely expressible in the form
\[
\beta=\displaystyle\sum_{i=1}^{n}c_{1}(\cO_{\Pbb(\Escr)}(1))^{i}\cap \pi^{*}\alpha_{i},
\]
for $\alpha_{i}\in A_{k-n+i}(B)$.
\end{theorem}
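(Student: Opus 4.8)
The plan is to recognize the assertion as the projective bundle formula for Chow groups and to prove it along the lines of \cite{fulton}. Write $\xi=c_1(\cO_{\Pbb(\Escr)}(1))$ and let $\theta\colon\bigoplus_{i=0}^{n}A_{k-n+i}(B)\to A_k(\Pbb(\Escr))$ be the homomorphism $(\alpha_i)_i\mapsto\sum_i\xi^i\cap\pi^*\alpha_i$; the content of the statement is exactly that $\theta$ is bijective (uniqueness $=$ injectivity, expressibility $=$ surjectivity). The inputs I would take as known are: the defining identity for Segre classes $\pi_*(\xi^{\,n+j}\cap\pi^*\alpha)=s_j(\Escr)\cap\alpha$, with $s_0(\Escr)=1$ and $s_j(\Escr)=0$ for $j<0$ (the vanishing being immediate, since $\pi_*$ kills any class whose dimension exceeds that of the image of its support); and the classical computation $A_*(\Pbb^n_K)\cong\bigoplus_{i=0}^{n}\mathbb Z$ over a field $K$, with the class of a degree-$m$ subvariety of dimension $d$ equal to $m$ times that of a linear $\Pbb^d$.

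Uniqueness is the formal half. Suppose $\sum_i\xi^i\cap\pi^*\alpha_i=0$. For each index $j$ I would apply the operator $\pi_*(\xi^{\,n-j}\cap-)$; since divisor operations commute with flat pullback and with one another, this yields $0=\sum_i\pi_*(\xi^{\,n-j+i}\cap\pi^*\alpha_i)=\sum_i s_{i-j}(\Escr)\cap\alpha_i=\alpha_j+\sum_{i>j}s_{i-j}(\Escr)\cap\alpha_i$, using $s_0=1$ and the vanishing of Segre classes in negative degrees. Running this downward from $j=n$ (where the relation reads $\alpha_n=0$) forces $\alpha_j=0$ for all $j$.

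For expressibility I would argue by Noetherian induction on $B$, proving ``$\theta$ is surjective for every vector bundle on $B$'' while assuming the same over every proper closed subscheme of $B$. Since $A_*(\Pbb(\Escr))$ is generated by the classes $[Z]$ of integral closed subschemes, and $A_*$ is unchanged by passing to $B_{\mathrm{red}}$, it suffices to put each such $[Z]$ in the image of $\theta$, and one may take $B$ reduced. Let $W=\overline{\pi(Z)}\subseteq B$ with reduced structure and $\jmath\colon\Pbb(\Escr|_W)\hookrightarrow\Pbb(\Escr)$ the induced closed immersion. If $W\subsetneq B$, the inductive hypothesis over $W$ gives $[Z]=\sum_i\xi_W^{\,i}\cap\pi_W^*\alpha_i$ on $\Pbb(\Escr|_W)$; applying $\jmath_*$ and using $\jmath^*\cO_{\Pbb(\Escr)}(1)=\cO_{\Pbb(\Escr|_W)}(1)$, the projection formula, and the commutation of proper pushforward with flat pullback in the fibre square over $W\hookrightarrow B$, one obtains $[Z]=\sum_i\xi^i\cap\pi^*(\iota_{W*}\alpha_i)$, which lies in the image of $\theta$.

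The substantive case is $W=B$, i.e.\ $B$ a variety and $Z$ dominating it. Here I would restrict to the generic point $\eta=\mathrm{Spec}\,K(B)$: the generic fibre is $\Pbb^n_{K(B)}$, so $[Z]$ restricts there to $m\,\xi_\eta^{\,n-d}\cap[\Pbb^n_{K(B)}]$ for $d=\dim Z-\dim B$ and some integer $m$, and since $[\Pbb^n_{K(B)}]=\pi_\eta^*[\eta]$ the class $[Z]-m\,\xi^{\,n-d}\cap\pi^*[B]$ restricts to $0$ on the generic fibre. By the standard spreading-out principle — Chow groups being continuous along the filtered system of dense opens $U\subseteq B$ — this difference already vanishes in $A_*(\Pbb(\Escr|_U))$ for some dense open $U$; the localization sequence $A_*(\Pbb(\Escr|_{B\setminus U}))\to A_*(\Pbb(\Escr))\to A_*(\Pbb(\Escr|_U))\to 0$ then exhibits $[Z]-m\,\xi^{\,n-d}\cap\pi^*[B]$ as pushed forward from the projective bundle over the proper closed subscheme $B\setminus U$, hence in the image of $\theta$ by the inductive hypothesis together with the pushforward computation above. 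Therefore $[Z]\in\mathrm{im}(\theta)$, closing the induction. The main obstacle is precisely this last step: controlling the dominant-fibre case requires combining the generic-fibre computation with the localization sequence and with the spreading-out of cycles and rational equivalences over a dense open, whereas uniqueness is purely formal and the non-dominant case is routine functoriality.
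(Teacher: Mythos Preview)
The paper does not supply its own proof of this statement: it is quoted verbatim as a theorem from \cite{fulton} (the structure theorem for the Chow group of a projective bundle), and is used only as input to the discussion in \S\ref{structure}. There is therefore nothing in the paper to compare your argument against beyond the citation itself.

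That said, your proposal is correct and is precisely the standard proof one finds in Fulton's \emph{Intersection Theory} (Theorem~3.3 there): injectivity via the Segre-class identities $\pi_*(\xi^{\,n+j}\cap\pi^*\alpha)=s_j(\Escr)\cap\alpha$ together with downward induction, and surjectivity via Noetherian induction on $B$, handling the dominant case by passing to the generic fibre $\Pbb^n_{K(B)}$, spreading out, and invoking the localization sequence. The one step that genuinely carries content---your ``main obstacle''---is exactly the one Fulton isolates as well, and your treatment of it is sound; the limit identification $A_*(\Pbb^n_{K(B)})\cong\varinjlim_U A_*(\Pbb(\Escr|_U))$ you implicitly use is standard. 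One incidental remark: the summation in the paper's displayed statement runs from $i=1$, whereas the correct range (which you use) is $i=0,\dots,n$; this is a typographical slip in the paper, not in your argument.
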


In light of this fact, we know the map $p:A_{*}B[\zeta]\rightarrow A_{*}\Pbb(\Escr)$ given by
\[
\beta_0+\beta_{1}\zeta+\cdots+\beta_{r}\zeta^{r}\longmapsto \pi^{*}\beta_0+c_1(\cO_{\Pbb(\Escr)}(1))\cap \pi^{*}\beta_{1}+\cdots + c_1(\cO_{\Pbb(\Escr)}(1))^{r}\cap \pi^{*}\beta_{r}
\]
is a surjective morphism of groups. If $B$ (and so $\Pbb(\Escr)$) is non-singular, we can lift this statement to the level of Chow cohomology and deduce that any class $C\in A^{*}\Pbb(\Escr)$ can be written as
\[
C=\pi^{*}\beta_0+\pi^{*}\beta_{1}\cdot \zeta+\cdots+\pi^{*}\beta_{r}\cdot \zeta^{r},
\]
where $\zeta$=$c_1(\cO_{\Pbb(\Escr)}(1))\cap [\Pbb(\Escr)]$. Thus
\[
\pi_{*}(C)=\beta_{1}\cdot \pi_{*}(\zeta)+\cdots +\beta_{r}\cdot \pi_{*}(\zeta^r)
\]
by the projection formula, and so $\pi_{*}(C)$ is completely determined by the $\pi_{*}(\zeta^i)$s, which is absolutely essential for the proof of Theorem~\ref{mainthm} (as we will see in section \S\ref{proof}). Now if $B$ is singular, the projection formula doesn't apply to a general class of the form
\[
C=\pi^{*}\beta_0+c_1(\cO_{\Pbb(\Escr)}(1))\cap \pi^{*}\beta_{1}+\cdots + c_1(\cO_{\Pbb(\Escr)}(1))^{r}\cap \pi^{*}\beta_{r}
\]
as given by the structure theorem. However, if the $\pi^{*}\beta_i=q_{i}\cap [\Pbb(\Escr)]$ in the expression given above, where the $q_{i}=Q_{i}(c_{j_{1}}(\pi^{*}E_{1}),\cdots ,c_{j_{m}}(\pi^{*}E_{m}))$ are polynomials in Chern classes of vector bundles on $B$, then we can write $c_1(\cO_{\Pbb(\Escr)}(1))^{i}\cap \pi^{*}\beta_{i}$ as
\[
q_{i}\cdot c_1(\cO_{\Pbb(\Escr)}(1))^{i}\cap [\Pbb(\Escr)]=q_i\cap \zeta^i.
\]
Now by the projection formula, $\pi_{*}(q_i\cap \zeta^i)=q_{i}\cap \pi_{*}(\zeta^i)$, and so $\pi_{*}(C)$ again only depends on the $\pi_{*}(\zeta^i)$s as in the non-singular case, thus we can apply the formula of Theorem~\ref{mainthm} to compute $\pi_{*}(C)$. In summary, Theorem~\ref{mainthm} applies to classes in the Chow group of a projective bundle of the form stated above over \emph{any} algebraic scheme $B$, long as $C$ is written in terms of polynomials of Chern classes of vector bundles on $B$ operating on powers of the hyperplane section in $\Pbb(\Escr)$.

\section{The Proof}\label{proof}
We recall the assumptions made in \S\ref{intro}. Let $B$ be a non-singular compact complex algebraic variety, $\Lscr$ a line bundle on $B$, $\Escr\to B$ be a vector bundle of the form $\Escr=\Lscr^{a_1}\oplus \cdots \oplus \Lscr^{a_{n+1}}$, and let $\Pbb(\Escr)\overset \pi\to B$ be its associated projectivization. We prove Theorem~\ref{mainthm} in the case that only one non-zero power of $\Lscr$ appears in $\Escr$, i.e., $c(\Escr)=(1+aL)^k$ for some $a\in \mathbb{Z}$, where $L=c_1(\Lscr)$. Once this case is established the general case will immediately follow.
\begin{proof}
Let $C\in A^{*}\Pbb(\Escr)$, and let $f_{C}= \beta_{0}+\cdots+\beta_{s}\zeta^{s}$ be any polynomial which maps to $C$ under the natural projection $p:A^{*}B[\zeta]\to A^{*}\Pbb(\Escr)$, i.e., $f_C$ is a polynomial representation of $C$ in terms of powers of $\zeta=\cO_{\Pbb(\Escr)}(1)$ . Since $\pi_{*}(\pi^{*}\beta_{i}\cdot \zeta^{i})=\beta_i\cdot \pi_{*}(\zeta^{i})$, $\pi_{*}(C)$ is completely determined by the $\pi_{*}(\zeta^i)$s. To evaluate $\pi_{*}(\zeta^i)$, we use the \emph{Segre class} $s(\Escr)$ of $\Escr$, which is the multiplicative inverse of $c(\Escr)$ in $A^{*}\Pbb(\Escr)$. By definition,
\[
s(\Escr)=\pi_{*}(1+\zeta+\zeta^{2}+\cdots ),
\]
and since $s(\Escr)=\frac{1}{c(\Escr)}$,
\begin{eqnarray*}
\pi_{*}(1+\zeta+\zeta^{2}+\cdots )&=&\frac{1}{(1+aL)^k}\\
                                  &=&\frac{1}{(k-1)!}\cdot \frac{d^{k-1}}{dx^{k-1}}\left(\frac{1}{1-x}\right)|_{x=-aL}\\
                                  &=&(1+\alpha_{1}x+\alpha_{2}x^2+\cdots)|_{x=-aL}.\\
\end{eqnarray*}
Matching terms of like dimension, we see that
\[
1\mapsto0,\hspace{2mm}\zeta\mapsto0,\cdots, \zeta^{n-1}\mapsto0,\hspace{2mm}\zeta^{n}\mapsto1,\hspace{2mm}\zeta^{n+1}\mapsto\alpha_{1}(-aL),\hspace{2mm}\zeta^{n+2}\mapsto\alpha_{2}(-aL)^2,
\cdots
\]

So to obtain $\pi_{*}(C)$, we must take $f_C=\beta_{0}+\cdots+\beta_{s}\zeta^{s}$ and substitute the $\zeta^{i}$s according to the rules above. To accomplish this, we introduce the polynomial
\begin{eqnarray*}
f_{C_{k}}&=&\frac{1}{\zeta^{n-k+1}}\cdot (f_C-(\beta_{0}+\cdots +\beta_{n-1}\zeta^{n-1}))\\
         &=&\beta_{n}\zeta^{k-1}+\cdots+\beta_{s}\zeta^{s-(n-k+1)}.\\
\end{eqnarray*}
Thus,
\[
\frac{1}{(k-1)!}\cdot \frac{d^{k-1}}{d\zeta^{k-1}}(f_{C_{k}})=\beta_{n}+\beta_{n+1}\cdot (\alpha_{1}\zeta)+\cdots+\beta_{s}\cdot (\alpha_{l}\zeta^{l}),
\]
where $l=s-n$. The crux of the proof is that evaluating this expression at $\zeta=-aL$ performs all the desired substitutions in one fell swoop, i.e.,
\[
\pi_{*}(C)=\frac{1}{(k-1)!}\cdot \frac{d^{k-1}}{d\zeta^{k-1}}(f_{C_{k}})|_{\zeta=-aL},
\]
which is precisely the statement of Theorem~\ref{mainthm} in our case.
For the general case where $c(\Escr)=(1+d_{1}L)^{k_{1}}\cdots(1+d_{m}L)^{k_{m}}$, just take the partial fraction decomposition of $\frac{1}{c(\Escr)}$ and reduce the computation to a linear combination of cases as above.
\end{proof}

\section{Further Directions}
Exercising one's (perhaps) natural penchant for generalization, there are obvious directions in which the scope of Theorem~\ref{mainthm} could be furthered. Products of projective bundles each of which are of the form assumed throughout this note would be a natural starting place for one. It would also be nice to have such a formula for \emph{weighted} projective bundles. In \cite{amrani}, al Amrani generalizes the relation we have in the Chow cohomology of $\Pbb(\Escr)$ coming from the unique monic generator of the kernel of the map $p:A^{*}B[\zeta]\to A^{*}\Pbb(\Escr)$, namely
\begin{equation*}
\tag{$\dagger \dagger$}
\zeta^{n}+c_1(\Escr)\zeta^{n-1}+\cdots +c_{n}(\Escr)=0
\end{equation*}
where $n=rk(\Escr)$ and $\zeta=c_1(\cO_{\Pbb(\Escr)}(1))$, to an analogous relation in the topological cohomology ring of a weighted projective bundle $\Pbb_{\vec{w}}(\Escr)$, which is obtained from $(\dag \dag)$ by replacing each $c_{i}(\Escr)$ by his "twisted" Chern classes relative to the weights $\vec{w}$ of the projectivization (he defines an analogous tautological bundle as well). In the proof of Theorem~\ref{mainthm}, we used the Segre class $s(\Escr)$ to determine the pushforwards of the $\pi_{*}(\zeta^{i})$s. Alternatively, we could have used $(\dag \dag)$ to determine the $\pi_{*}(\zeta^{i})$s, so al Amrani's generalization of $(\dag \dag)$ could perhaps be used to determine pushforwards of powers of hyperplane sections in $A_{*}\Pbb_{\vec{w}}(\Escr)$ as well. Going even further, we speculate that a generalization at the level of \emph{toric} bundles lurks in the shadows.


\end{document}